\newtheorem{theorem}{\bf Theorem}[section] 
\newtheorem{corol}[theorem]{\bf Corollary} 
\newtheorem{lemma}[theorem]{\bf Lemma} 
\newtheorem{prop}[theorem]{\bf Proposition}
\DeclareMathOperator{\Aut}{Aut} 
\newcommand{\abs}[1]{\lvert#1\rvert} 
\newcommand{\noi}{{\noindent}} 
\newcommand{\ctl}{\centerline} 
\newcommand{\la}{{\langle}} \newcommand{\ra}{{\rangle}} 
\newcommand{\nd}{{\text{ and }}}
\newcommand{\stx}{\begin{smallmatrix}} 
\newcommand{\estx}{\end{smallmatrix}} 
\newcommand{\Lb}{{\Lambda}}
\newcommand{\Z}{{\mathbb Z}} 
\newcommand{\cB}{{\mathcal B}} 
\newcommand{\cC}{{\mathcal C}}
\newcommand{\cS}{{\mathcal S}} 
\DeclareMathOperator{\GL}{GL}
\newcommand{\sn}{\cS^n} 
\newcommand{\glnz}{\GL_n(\Z)}
\begin{document}

\title{Bases of minimal vectors in lattices, III} 
\author{Jacques Martinet} 
\address{%
Institut de Math\'ematiques\\ 
351, cours de la Lib\'eration\\ 
33405 Talence cedex\\ 
France} 
\email{Jacques.Martinet@math.u-bordeaux1.fr} 
\author{Achill Sch\"urmann} 
\address{%
Institute of Mathematics\\
University of Rostock\\
18051 Rostock\\
Germany}
\email{achill.schuermann@uni-rostock.de}

\begin{abstract} 
We prove that all Euclidean lattices of dimension~$n\le 9$ 
which are generated by their minimal vectors, also possess a basis 
of minimal vectors. By providing a new counterexample, we show
that this is not the case for all dimensions~$n\ge 10$.
\end{abstract} 

\thanks{The second author was supported by the
Deutsche Forschungsgemeinschaft (DFG) under 
grant SCHU 1503/4-2 and by the Universit\'e Bordeaux 1.} 
\subjclass{11H55}
\keywords{Euclidean lattices, minimal vectors, bases} 
\maketitle

\section{Introduction}\label{secintroit}

In their paper \cite{cs-1995}, Conway and Sloane constructed an example of an 
$11$-dimensional lattice generated by its minimal vectors, but having 
no basis of minimal vectors. They left open the question 
of the existence of such lattices in lower dimensions. 
In \cite{martinet-2007}, the first author proved that such lattices do not exist
in dimensions $n\leq 8$, leaving open their existence in dimension~$9$ and~$10$. 
In this paper we fully resolve this question.

\begin{theorem}\label{mainthm} 
A lattice of dimension $n\le 9$ which is generated 
by its minimal vectors, has also a basis of minimal vectors. 
In all dimensions $n\ge 10$ there exist lattices which are generated 
by their minimal vectors, but have no basis of minimal vectors. 
\end{theorem}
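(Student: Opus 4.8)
The plan is to prove the two assertions by quite different means, the second being considerably the easier.

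\emph{The positive statement, $n=9$.} Since the range $n\le 8$ is already settled in \cite{martinet-2007}, only $n=9$ is new, and one may induct on $n$. First reduce to \emph{irreducible} lattices: if a lattice $L$ generated by its minimal vectors splits as an orthogonal sum $L_1\perp L_2$, then necessarily $\min(L_1)=\min(L_2)$ and each $L_i$ is generated by its own minimal vectors; being of smaller dimension each $L_i$ has a basis of minimal vectors by induction, and these concatenate. So let $L$ be irreducible, of dimension $9$, generated by $S(L)$, and suppose it had no basis of minimal vectors. The key is the criterion (valid here because the positive statement already holds in dimension $8$): \emph{$L$ has a basis of minimal vectors if and only if there is a primitive $8$-dimensional sublattice $M\subset L$ generated by minimal vectors of $L$ such that some minimal vector of $L$ has image $\pm1$ in $L/M\cong\Z$.} One direction takes $M=\langle b_1,\dots,b_8\rangle$ for a basis $b_1,\dots,b_9$ of minimal vectors; for the converse, $M$, being generated by minimal vectors of norm $\min(L)$, has minimum $\min(L)$, hence a basis of minimal vectors by the $n\le8$ case, to which one appends the escaping minimal vector of height $1$. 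By contraposition, a counterexample $L$ in dimension $9$ would have the property that \emph{every} primitive $8$-dimensional sublattice $M$ generated by minimal vectors of $L$ has all its escaping minimal vectors of height $\ge 2$ over $M$.

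\emph{Reaching a contradiction.} Fixing such an $M$ — one must first check that one exists, e.g.\ by extending a single minimal vector step by step while keeping the running span a primitive sublattice; configurations in which this stalls at rank below $8$ have to be treated separately — choose a basis $b_1,\dots,b_8$ of minimal vectors of $M$ and $t\in L$ with image $1$ in $L/M$. Then $\det L=\det M\cdot h^2$, where $h$ is the distance from $t$ to the real span of $M$, and a minimal vector $w\notin M$ with image $k$ in $L/M$ satisfies $\min(L)=\abs{w}^2\ge k^2h^2$, whence $h^2\le\min(L)/4$ (using $\abs{k}\ge 2$) and, since $S(L)$ still generates $L$, the heights of the escaping minimal vectors are globally coprime. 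Combining a lower bound for $\det L$ from Hermite's inequality with an upper bound for $\det M$ should now force a contradiction. The naive bound $\det M\le\min(L)^8$ is not sharp enough, however; one must exploit the much smaller determinant imposed on an $8$-dimensional lattice generated by its minimal vectors — for which one appeals to the classification of the relevant minimal classes (equivalently, of the low-dimensional lattices in play) — and play off the several coprime escaping heights against one another. Carrying through this finite but delicate case analysis, in which the hypothesis $n\le9$ enters essentially and a computer-assisted check over the dimension-$9$ configurations is natural, is the main obstacle.

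\emph{Counterexamples, $n\ge10$.} For $n=10$ I would exhibit one explicit lattice $L_{10}$, most conveniently by a Gram matrix of one of its bases, chosen so its minimal vectors are readily listed. One verifies that $S(L_{10})$ generates $L_{10}$, and that $L_{10}$ has no basis of minimal vectors by applying the criterion above in dimension $10$ — legitimate now that the positive statement is known in dimension $9$: it suffices to check that for every primitive $9$-dimensional sublattice $M$ generated by minimal vectors of $L_{10}$, no minimal vector outside $M$ has image $\pm1$ in $L_{10}/M$, a finite computation. Finally, for $n>10$ put $L_n=L_{10}\perp A$, where $A$ is an orthogonal sum of $n-10$ rank-one lattices each of minimum $\min(L_{10})$. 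Every minimal vector of $L_n$ is pure (it lies wholly in $L_{10}$ or in a single rank-one summand), so $S(L_n)$ generates $L_n$; and in any alleged basis of $L_n$ by minimal vectors, generating the rank-one summands forces exactly one of the two generators from each, leaving ten minimal vectors that would form a basis of $L_{10}$ — impossible. This propagates the single example to all $n\ge10$.
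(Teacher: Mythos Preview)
Your treatment of $n\ge 10$ matches the paper's: one explicit $10$-dimensional Gram matrix, a direct check that no ten of its minimal vectors form a basis, and propagation to higher dimensions by orthogonal sums.

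For $n=9$ there is a genuine gap, and the strategy you sketch is not the paper's. The determinant/height argument does not close: from $h^2\le\min(L)/4$ and Hermite's lower bound on $\det L$ you would need a strong \emph{upper} bound on $\det M$, but an $8$-dimensional lattice generated by its minimal vectors can have determinant arbitrarily close to $\min(M)^8$ (take the $e_i$ nearly orthogonal), so no contradiction is forced; the classification of minimal classes constrains configurations of minimal vectors up to $\glnz$-equivalence, not determinants, and does not rescue this line. You also leave unresolved the existence of your primitive rank-$8$ sublattice $M$ generated by minimal vectors of~$L$---the saturation in~$L$ of eight independent minimal vectors need not itself be generated by minimal vectors, and those ``stalling'' cases are exactly where the difficulty sits. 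The paper proceeds quite differently: it works with full-rank \emph{Minkowskian} sublattices $\Lb'\subset\Lb$ (spanned by nine independent minimal vectors) and organises the case analysis by the maximal index $\imath=[\Lb:\Lb']$. The classification in~\cite{kms-2010} of the $\Z/d\Z$-codes arising from $\Lb/\Lb'$ bounds $\imath\le 16$ and pins down the structure of $S(\Lb)$ for each quotient type; for $\imath\le 4$ and $\imath\ge 7$ explicit manipulations with these codes produce bases of minimal vectors, while the hard cases $\imath=5,6$ are reduced to finitely many configurations and finished by computer-verified polyhedral computations in the space of Gram matrices. No Hermite-type inequality enters anywhere.
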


For standard terminology on lattices used here and in the sequel
we refer the reader to~\cite{martinet-2003}. 
Given, an $n$-dimensional Euclidean vector space~$E$,
we say that a lattice $\Lb\subset E$ is {\em well rounded} 
if its minimal vectors span~$E$. Any system of $n$~independent 
minimal vectors then generates a sublattice $\Lb'$ 
of finite index in~$\Lb$, referred to as {\em Minkowskian sublattice}.
We denote by $\imath=\imath(\Lb)$ 
the {\em maximal index} $[\Lb:\Lb']$ for Minkowskian sublattices~$\Lb'$ of~$\Lb$.

Our proof of Theorem~\ref{mainthm} 
makes use of knowledge about possible values 
of $\imath$ and of the corresponding structures of the set 
of minimal vectors of $\Lb$.
Our basic references for this information are~\cite{martinet-2001} 
(in particular Table~11.1) and \cite{kms-2010} (in particular Tables~2 to~10),
which extend previous works of Watson, Ryshkov, and Zahareva 
up to dimension~$9$.
We give a brief sketch of the used results 
and recall some notations for this paper in Section~\ref{sec:first_paper}.

We split the proof of Theorem~\ref{mainthm} 
into different cases according to the maximal
index~$\imath$ of a putative counterexample.
Loosely speaking, proofs are straightforward for lattices having 
a small or a large maximal index. 
In dimension~$9$ ``large index'' means $\imath\ge 10$: by the results in~\cite{kms-2010},
there exist sufficiently many minimal vectors in these cases, 
from which bases can be extracted. 
Some more details are given in Section~\ref{seclargeindex},
where we also treat $9$-dimensional lattices with maximal
index~$\imath=7,8,9$. 
A ``small index'' in dimension~$9$ 
means $\imath \le 4$, a case that we consider in Section~\ref{sec:2elementary}.

This leaves us with the two more difficult cases of $\imath=5,6$,
for which we use some computer assistance. The difficult case with $n=9$ and $\imath=6$
is treated in Section~\ref{sec:index6}. 
In Section~\ref{sec:index5}, we reduce the other difficult case with 
$n=9$ and $\imath=5$ to the study of just one special type of
lattices. It turns out that such lattices do not exist for $n=9$.
However, the study of corresponding lattices for $n=10$, lead
us to the counterexamples described in Section~\ref{sec:counterexample},
by which we finish the proof of Theorem~\ref{mainthm}.
In Section~\ref{sec:algorithmic_approach} we not only give some
background information on the computer calculations, but we also
explain how the same techniques could be used for a general
algorithmic approach to the proof of Theorem~\ref{mainthm}.
It should be noted that such a fully computerized proof seems
practically infeasible and that only the interplay of human reasoning
and computer assistance allowed us to obtain the dimension~$9$ part of the theorem.

\section{Classification of minimal classes}  \label{sec:first_paper}

This paper relies largely on the results in~\cite{kms-2010}. 
There we describe all the $\Z/d\Z$-codes arising from a pair 
$(\Lb,\Lb')$ of $9$-dimensional lattices, such that $\Lb$ is generated 
by its set $S=S(\Lb)$ of minimal vectors and $\Lb'$ has a basis 
$\cB=(e_1,\dots,e_9)$ of vectors of~$S$; here, $d$ is the annihilator 
of $\Lb/\Lb'$, and the code words are the elements 
$(a_1,\dots,a_9)\in(\Z/d\Z)^9$ 
such that $\frac{a_1 e_1+\dots+a_9 e_9}d$ belongs to~$\Lb$.
We classify pairs $(\Lb,\Lb')$ according to possible
structures of $\Lb / \Lb'$, merely viewed as an abstract Abelian group.
Using the standard convention for quoting Abelian groups by their
elementary divisors, we speak for example 
of type~$(3,2)$ for groups of order~$6$ isomorphic to $\Z / 3\Z \times \Z / 2\Z$.
In \cite{kms-2010} we show that 
quotients $\Lb/\Lb'$ may have any possible structure of order up to index~$10$, 
or they may be of type $(12)$, $(6,2)$, $(4,4)$, $(4,2,2)$ or $(2,2,2,2)$ for larger index.

Given a code~$C$, we attach in \cite{kms-2010} a unique {\em minimal class}~$\cC_C$ 
(in the sense of \cite[Section~9.1]{martinet-2007}; cf. Proposition~3.1 in \cite{kms-2010}).
This is an equivalence class of lattices with the property that 
for any pair $(\Lb,\Lb')$ defining $C$, 
the set $S(\Lb)$ contains $S(\Lb_0)$ for some lattice $\Lb_0\in\cC_C$. 
In other words, the sets of minimal vectors of lattices in $\cC_C$, 
which are well-defined up to $\glnz$ equivalence, are minimal with respect to inclusion.

The general method that we can use to prove the existence or
nonexistence of a type, 
is to show that the set of lattices realizing a given code or minimal
class is non-empty or empty.
Instead of working with lattices directly, we work with the space of Gram matrices of lattice bases,
that is, with the space of positive definite, real symmetric matrices.
The question of existence or nonexistence of a lattice type 
can be decided based on polyhedral computations
with rational coordinates, hence, with 
linear algebra that can be rigorously checked using a computer.
For a detailed description we refer to~\cite{kms-2010}.

\section{A general algorithmic approach}  \label{sec:algorithmic_approach}

For the proof of Theorem~\ref{mainthm} presented here
we use computer assistance to rule out 
two difficult cases in dimension~$9$ in Sections~\ref{sec:index6} and~\ref{sec:index5}.
The used techniques 
are general enough, however, to allow (in
principle) a proof of Theorem~\ref{mainthm} entirely based on computer reasoning. 
In dimension~$n$ the general algorithmic approach can be split into two
main components:

\medskip

\begin{enumerate}
\item For each minimal class of well rounded~$n$-dimensional lattices obtain a
  polyhedral realization space~$P$ of Gram matrices.
\item For each face of~$P$ obtain a ``typical Gram matrix'' 
         and check if its minimal vectors generate but do not
         provide a basis of~$\Z^n$.
\end{enumerate}

\medskip

For $n=9$, the minimal classes of well rounded lattices are classified in~\cite{kms-2010}.
There, the non-existence or existence of a minimal class is
decided by checking if a corresponding polyhedral realization space
is empty or not. Thus Step~1 is carried out in~\cite{kms-2010}.
Typical Gram matrices 
in {\tt PARI/GP} format \cite{pari} 
and additional information   
can be obtained from the file {\tt Gramindex.gp} in the ``online appendix'' 
of~\cite{kms-2010}.

Below we explain Step~2. 
For an implementation we used {\tt MAGMA}~\cite{magma}, 
together with the program {\tt lrs}~\cite{lrs} to deal with the
necessary polyhedral computations. 
For the exclusion of specific cases in this paper, we
use Step 2 only for a few specific cases: 22 faces in
Sections~\ref{sec:index6}
and one face in Section~\ref{sec:index5}.

\subsection*{Polyhedral realization spaces of minimal classes}

A minimal class~$\cC_C$ has a polyhedral ``realization space''
of Gram matrices attached to it. To see this connection, assume the code $C$ 
contains $k$~code words $a^{(i)}$, $i=1,\ldots,k$, the lattice $\Lb'$ has a basis of minimal 
vectors $e_1,\ldots, e_n$ of $\Lb$ and 
$\Lb = \langle \Lb', f_1,\dots,f_k \rangle$ with 
$$
f_i=\frac{a_1^{(i)}e_1+\dots+a_n^{(i)}e_n}d \, ,
$$ 
for $i=1,\ldots,k$. 
With respect to a fixed chosen basis $B=(b_1,\ldots, b_n)$ of $\Lb$,
the $e_i$ have coordinates $\bar{e}^{(i)}\in \Z^n$, which can be expressed solely in terms 
of the $a_j^{(i)}$ and~$d$. Note that these coordinates are completely independent 
of the specific lattices~$\Lb$ and~$\Lb'$.

{\em Example.}
Let us look at a specific case that we consider in Section~\ref{sec:index5}:
Let $n=9$, $d=5$ and $\Lb = \langle \Lb', e \rangle$ with $e$ as in~\eqref{eqn:edef}.
Then we can choose $B=(e,e_2,\ldots,e_9)$ as a basis for~$\Lambda$
and the coordinate vector $\bar{e}^{(1)}$ with respect to $B$ is $(5,-1,-1,-2,-2,-2,-2,-2,0)$.

\bigskip

Assuming the minimum of $\Lb$ is $1$, we know that 
the Gram matrix of $B$ is contained in the affine subspace 
\begin{equation} \label{eqn:linearspace}
\{
G \in \sn \; | \; G[\bar{e}^{(i)}] = 1 \mbox{ for } i=1,\ldots,n
\}
\end{equation}
within the space $\sn$ of real symmetric $n\times n$ matrices.
Here, we make use of the notation $G[a]$ for $a^t G a$.
The infinitely many linear conditions $G[a]\geq 1$ on $\sn$ that are satisfied 
for all non-zero integral vectors~$a$, 
define a set referred to as {\em Ryshkov polyhedron}.
It is a locally finite polyhedral set (see~\cite[Chapter~3]{schuermann-2008} for details).
Its intersection with~\eqref{eqn:linearspace} is either empty if the code can not be realized,
or it is a polytope~$P$ (convex hull of finitely many Gram matrices) if the code can be realized.
In the latter case, the relative interior points (within the affine subspace spanned by the polytope)
are Gram matrices from bases of lattices in the minimal class~$\cC_C$.
Note that all of these lattices have the same set of minimal vectors,
so that it is sufficient to know one relative
interior Gram matrix, which can be considered ``typical'' for its class.

\subsection*{Faces and typical Gram matrices}

For a given minimal class, the boundary of the constructed
polytope~$P$ 
(with respect to the topology of the affine subspace spanned by it)
is subdivided into {\em faces}, that is, 
into parts which themselves are polytopes of lower dimension.
Each face is uniquely defined by some additional
affine equations~$G[a^{(j)}]=1$, with coordinate vectors~$a^{(j)}\in \Z^n$.
These coordinate vectors are the same for all relative interior Gram
matrices of a face and there are only finitely many of them.
In terms of corresponding, they give coordinates of additional minimal vectors 
with respect to the chosen basis.

By considering a typical (any relative interior) Gram matrix for each
face of~$P$, one can decide for the corresponding minimal
class, whether or not there exist lattices in the class which are generated by minimal
vectors but do not provide a basis among them: For each typical Gram
matrix~$G$ one has to check whether or not the coordinate vectors 
$\{ a\in \Z^n \; | \; G[a] = 1\}$ attaining the minimum generate $\Z^n$ but
do not provide a basis for $\Z^n$.

\subsection*{Choosing specific faces}

For a full automated proof of Theorem~\ref{mainthm}, the number of
cases to be considered would be huge. We use computer
assisted checks only for specific cases.
In each considered case, 
the coordinate vectors~$a^{(j)}$ and $\bar{e}^{(i)}$ of assumed 
minimal vectors generate but do not provide a basis of~$\Z^9$.

{\em Example. }
In the case with $d=5$ considered in Section~\ref{sec:index5},
we assume the existence of an additional minimal vector ($x$, given in~\eqref{eqn:xdef})
which has coordinates $a^{(1)}=(-4,0,0,2,2,-1,-1,-1,-1)$ with respect
to the basis $B=(e,e_2,\ldots,e_9)$. 
Here, $a^{(1)}$ together with 
$\bar{e}^{(i)}$, for $i=1,\ldots, 9$, generate $\Z^9$, but no choice
of nine of the ten vectors gives a basis for~$\Z^9$. 

\medskip

Often, the linear conditions we start with imply additional linear
conditions~$G[a^{(j)}]=1$ with
additional coordinate vectors~$a^{(j)}\in \Z^9$;
or in the language of lattices: the existence of some minimal vectors implies the existence of others. 
A full list of all implied coordinate vectors can be determined by computing the minimal vectors
of a typical Gram matrix. If this full list contains a basis of~$\Z^9$,
then so do the minimal vectors for all lattices of the considered type. 
Note that all of these rational linear algebra operations 
can rigorously be verified using a computer.

{\em Example.}
In the $n=9$, $d=5$ case of Section~\ref{sec:index5} we find $10$~pairs of 
additional minimal vectors from the Gram matrix in~\eqref{eqn:n9d5gram}.
The full list of minimal coordinate vectors obtained in this way contains a basis of~$\Z^9$, 
excluding this case as a counterexample in dimension~$9$.

\medskip

\subsection*{Exploiting polyhedral symmetries}

In higher dimensions, i.e. for $n\geq 9$, 
the polyhedral computations necessary to decide 
whether or not a code is realizable can be quite involved. 
In these cases we can try to exploit available symmetries
to make the computations feasible.
The automorphism group of the code, yields an automorphism group of the corresponding minimal class:
\begin{equation} \label{eqn:autogrp}
\Aut \cC_C =
\{
U\in\glnz
\; | \;
U\bar{e}^{(i)} \in \{ \bar{e}^{(1)},\ldots, \bar{e}^{(n)} \} 
\mbox{ for all } i = 1,\ldots, n
\}
\end{equation}
The polytope $P \subset \sn$ described above is invariant with respect to this group:
We have $U^t P U = P$ for all $U\in \Aut \cC_C$. 
The same is true for the set of vertices of~$P$
and therefore the vertex barycenter of $P$ (if non-empty)
is contained in the linear subspace
\begin{equation} \label{eqn:invariantsubspace}
\{
G\in \sn \; | \;
U^t G U = G \mbox{ for all } U \in \glnz
\}
\end{equation}
of $\Aut \cC_C$-invariant Gram matrices. 
Thus for checking feasibility of a given code~$C$
we can restrict the polyhedral computations and search for an interior point
within the linear subspace~\eqref{eqn:invariantsubspace}.

Moreover it is also possible to make use of symmetries when considering 
a fixed code~$\cC_C$, together with some coordinate vectors $a^{(j)}$.
Instead of restricting to the invariant linear subspace~\eqref{eqn:invariantsubspace} 
coming from the symmetry group~$\Aut \cC_C$,
we can restrict to a corresponding linear subspace obtained from a subgroup of~$\Aut \cC_C$, 
for which also the set of coordinate vectors~$a^{(j)}$ is preserved.

{\em Example.}
In our example with $n=9$, $d=5$, 
the coordinate vectors~$\bar{e}^{(i)}$
and~$a^{(1)}=(-4,0,0,2,2,-1,-1,-1,-1)$ 
are for example invariant with respect to
permutations of~$e_2,e_3$, of~$e_4,e_5$ and of~$e_6,e_7,e_8$.

\section{Lattices of large index}\label{seclargeindex}

For $n=9$ and ``very large'' indices, namely for $\imath(\Lb)\ge 10$, 
lattices are generated by minimal vectors, and a basis of minimal 
vectors is ``almost'' in evidence on typical Gram matrices from the online appendix 
of~\cite{kms-2010}. Indeed, most of the matrices turn out to 
have diagonal entries equal to the minimum of the lattice 
we consider, and in one case where a diagonal entry was larger 
than this minimum, we can easily conclude by listing all minimal 
vectors of the lattice.

\smallskip

To work with $\imath=7,8,9$ is less simple: in these cases,
it may happen that the lattices $\Lb$ of a minimal class $\cC$ are not generated by their minimal vectors.
In fact, in some cases, the only minimal vectors are the nine $e_i$ spanning the sublattice $\Lb'$.
Or it may happen that all of the additional minimal vectors do not generate~$\Lb$,
for example if they all lie in $\Lb'$. 
We must in these cases explicitly use the existence of extra minimal vectors.

\smallskip 

The following proposition will be used (at least implicitly) 
from this Section onwards.

\begin{prop}\label{propbound} 
Let $(\Lb,\Lb')$ be a pair of $n$-dimensional lattices, where
$\Lambda'$ is generated by minimal vectors $(e_1, \ldots, e_n)$ of~$\Lb$.
Let $d$ the annihilator of $\Lb/\Lb'$ and let 
$x=\frac{a_1 e_1+\dots+a_n e_n}{d'}$ with $a_i, d'\in\Z$ and $d'\mid d$. 
Then the absolute values of the $a_i$ are bounded from above 
by $\abs{d'}$. 
\end{prop}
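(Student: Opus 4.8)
Rescale so that $\Lambda$ has minimum~$1$; then $\|e_j\|=1$ for every~$j$. We take~$x$ to be a minimal vector of~$\Lambda$ not lying in~$\Lambda'$ (the situation in which the statement is applied; for minimal vectors inside~$\Lambda'$ the bound need not hold), so that $\|x\|=1$. Fix an index~$i$; if $a_i=0$ there is nothing to prove, so assume $a_i\neq0$, and in particular $x$ together with the $e_j$, $j\neq i$, is a linearly independent family. First I would record the inequalities forced by minimality: for any $v\in\Lambda$ with $0\neq v\neq\pm x$ one has $\|x\pm v\|^2\geq1=\|x\|^2$, hence $|\langle x,v\rangle|\leq\tfrac12\|v\|^2$; applying this, and the analogous inequality for the~$e_j$, gives $|\langle x,e_j\rangle|\leq\tfrac12$ for all~$j$ and $|\langle e_j,e_k\rangle|\leq\tfrac12$ for $j\neq k$.

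The next step is a geometric reformulation. Applying Cramer's rule to $d'x=\sum_j a_j e_j$, the $i$-th coordinate $a_i/d'$ of~$x$ in the basis $(e_1,\dots,e_n)$ is a ratio of determinants whose absolute value equals $\|p(x)\|/\|p(e_i)\|$, where~$p$ denotes orthogonal projection onto the line $\langle e_j:j\neq i\rangle^{\perp}$; thus $|a_i|\leq|d'|$ is equivalent to $\|p(x)\|\leq\|p(e_i)\|$. Since $d\Lambda\subseteq\Lambda'$ and $p(e_j)=0$ for $j\neq i$, the image $p(\Lambda)$ is a rank-one lattice containing $\mathbb{Z}\,p(e_i)$; writing $p(\Lambda)=\mathbb{Z} g$, $p(e_i)=mg$ with $m\geq1$ and $p(x)=kg$ with $k=a_im/d'\in\mathbb{Z}$, the claim becomes the arithmetic inequality $|k|\leq m$ --- that is, a minimal vector must not project strictly beyond~$e_i$ in the direction transverse to the hyperplane spanned by the remaining basis vectors.

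To finish one must rule out $|k|\geq m+1$; after replacing~$x$ by~$-x$ we may take $k\geq m+1$. Here the plan is to exploit the minimality inequalities for several auxiliary vectors at once. Put $v:=d'x-a_ie_i=\sum_{j\neq i}a_je_j$, a nonzero element of~$\Lambda'$ other than~$\pm x$; with $c:=\langle x,e_i\rangle\in[-\tfrac12,\tfrac12]$ the inequality $|\langle x,v\rangle|\leq\tfrac12\|v\|^2$ reads $|d'-a_ic|\leq\tfrac12\bigl(d'^2-2a_id'c+a_i^2\bigr)$, and the same holds for every translate $v+\ell e_i\in\Lambda'$, $\ell\in\mathbb{Z}$, and for each~$e_j$, $j\neq i$. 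I expect the genuine difficulty to lie in extracting a contradiction from this system. The route I would pursue is to restrict to the plane $\langle e_i,x\rangle$, on which~$\Lambda$ induces a well-rounded rank-two lattice having $e_i$ and~$x$ among its minimal vectors --- where the minimal-vector configurations are completely classified --- or else to induct on~$n$, applying the statement to the rank-$(n-1)$ lattice $\Lambda\cap\langle e_j:j\neq i\rangle_{\mathbb{R}}$, whose Minkowskian sublattice $\langle e_j:j\neq i\rangle$ contains $x-(a_i/d')e_i=\tfrac1{d'}\sum_{j\neq i}a_je_j$. The technical core is the bookkeeping forced by a nontrivial $\gcd(a_1,\dots,a_n,d')$ --- which one should remove at the outset --- and by the case $m\geq2$, where one genuinely needs $|\langle e_j,e_k\rangle|\leq\tfrac12$ rather than merely $\|p(e_i)\|\leq1$.
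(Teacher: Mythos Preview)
Your proposal does not reach a proof: after the (correct) geometric reformulation $|a_i|\le|d'|\iff\|p(x)\|\le\|p(e_i)\|$, you only sketch possible continuations and explicitly leave the ``genuine difficulty'' open. More importantly, the one datum you never invoke is the maximal index $\imath=\imath(\Lambda)$, and this is exactly what drives the bound. The inequalities $|\langle x,e_j\rangle|\le\tfrac12$ and $|\langle e_j,e_k\rangle|\le\tfrac12$ are true but symmetric in $x$ and $e_i$; nothing in them prevents one minimal vector from projecting farther than another onto a fixed line, so a purely metric argument of the type you outline cannot close. Your two suggested routes do not help either: restricting to the plane $\langle e_i,x\rangle$ loses the information that the remaining $e_j$ are minimal, and the proposed induction on~$n$ founders because $x-(a_i/d')e_i$ need not lie in~$\Lambda$, let alone be minimal there.

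The paper's argument is a five-line index computation and uses no metric information beyond the word ``minimal''. After reducing to $\gcd(a_1,\dots,a_n,d')=1$, set $L=\langle\Lambda',x\rangle$ and $M=\langle x,\,e_j:j\ne i\rangle$. Since $x$ and the $e_j$ are all minimal vectors of~$\Lambda$, the lattice $M$ is itself a Minkowskian sublattice, whence $[\Lambda:M]\le\imath$. From $a_ie_i=d'x-\sum_{j\ne i}a_je_j\in M$ one reads off $[L:M]=|a_i|$; combined with $[L:\Lambda']=d'$ and $[\Lambda:\Lambda']=\imath$ (the proposition is applied only when $\Lambda'$ realizes the maximal index), the tower $[\Lambda:M]=[\Lambda:L]\cdot[L:M]$ yields
\[
|a_i|=[L:M]\le\frac{\imath}{[\Lambda:L]}=d'.
\]
In your projection language this is immediate too: $\det M=\tfrac{|a_i|}{d'}\det\Lambda'$, so $[\Lambda:M]=\tfrac{|a_i|}{d'}[\Lambda:\Lambda']$, and $[\Lambda:M]\le\imath=[\Lambda:\Lambda']$ gives $|a_i|\le d'$. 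The idea you are missing is precisely this swap: replace $e_i$ by $x$ to manufacture a \emph{second} Minkowskian sublattice and bound its index by~$\imath$.
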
 

\begin{proof}
We may suppose that $d'$ and the $a_i$ are coprime. 
Let $L=\la\Lb',x\ra$ contain $\Lb'$ to index~$d'$. 
Let $i$ such that $a_i\ne 0$. We have 
$$e_i=\frac{-d' x-\sum_{j\ne i}\,a_j e_j}{a_i}\,,$$ 
which shows that $L$ contains to index~$\abs{a_i}$ 
the lattice $M$ generated by $x$ and the $e_j,\,j\ne i$. 
We have $[\Lb:M]=[\Lb:L]\cdot[L:M]\le\imath$, hence 
$$\abs{a_i}=[L:M]\le\frac{\imath}{[\Lb:L]}=d'\,.$$ 
\end{proof}

\smallskip

\subsection*{Proof of Theorem 1.1 for cyclic quotients of order $\mathbf{d=9,8,7}$}
We first consider cyclic quotients of order $d=9,8,7$, 
writing $\Lb=\la\Lb',e\ra$ for some vector $e$ of the form 
$e=\dfrac 1d\,(\sum_{i=1}^9a_i e_i)$ with 
$a_i\in\Z$. Since quotients $\Z/d\Z$ do not exist in dimension~$8$, 
all $a_i$ are non-zero and we may choose them modulo~$d$.
For every integral $c$ prime to $d$ we have $\Lb=\la\Lb', c e\ra$,
allowing us to choose all $a_i$ in $\{1,2,\dots,\frac d2\}$.
The $\Z/d\Z$-code generated by the word 
$(a_1,\dots,a_{d/2})$ is well defined by this sequence 
up to permutation, that is, it is defined by the numbers $m_i$ of coefficients $a_j$ 
equal to~$i$. The transformation $e\mapsto c e$ induces an action 
of $(\Z/d\Z)^\times/\{\pm 1\}$ 
which amounts to a circular permutation 
of $(m_1,m_2,m_3)$ if $d=7$, of $(m_1,m_2,m_4)$ if $d=9$, 
and the exchange $m_1\leftrightarrow m_3$ if $d=8$.

Since $7,8$ and $9$ are prime powers, the hypothesis 
``$\Lb$ is generated by its minimal vectors'' 
amounts to the existence of a minimal vector $x\in\Lb$ 
which generates~$\Lb$ modulo $\Lb'$. 
In \cite[Table~2]{kms-2010} the possible codes are listed, 
up to a permutation as above, which we have to take into account here. 
For instance, if $\Lb$ is constructed using the code modulo~$7$ 
for which $(m_1,m_2,m_3)=(4,2,3)$, 
the hypothesis ``$\Lb$ is generated by its minimal vectors'' implies the existence of a minimal vector 
\hbox{$x\in e+\Lb'$} for an $e$ associated with any of the three systems 
$(m_1,m_2,m_3)$ $=$ $(4,2,3)$, $(3,4,2)$ and $(2,3,4)$, 
and we must thus consider three cases for one code listed 
in~\cite{kms-2010}.

\medskip 

\noi\underbar{$d=9$}. 
Six codes are listed in \cite[Table~2]{kms-2010}. An inspection of the 
corresponding Gram matrices
shows that a basis of minimal vectors 
exists for the first five, and that the minimal vectors generate 
a sublattice of index~$3$ in the remaining case with $(m_1,m_2,m_3,m_4)=(2,2,3,2)$. 
So we must assume the existence of at least one additional minimal vector here.
As we may permute $m_1, m_2$ and $m_4$ in this case,
we may assume that $\Lb$ contains a minimal vector $x=\frac{a_1 e_1+\dots+ a_9 e_9}9$ 
with $a_1\equiv a_2\equiv 1\mod 9$.
We have $\abs{a_i}\le\imath= 9$ (because $\imath(\Lb)\le 9$), 
hence $a_1,a_2=1$ or $-8$. If $a_1=a_2=-8$, then we may write 
$e_1+e_2+x=\frac{-x+a_3 e_3+\dots+a_9 e_9}8$, constructing this way 
a lattice of index $8$ in dimension~$8$, a contradiction. 
Hence $a_1=1$, say, and $(x,e_2,\dots,e_9)$ is a basis of minimal 
vectors for $\Lb$. 

\smallskip 

\noi\underbar{$d=8$}. For $14$ out of the $19$ codes listed 
in \cite[Table~2]{kms-2010}, Gram matrices show the existence of a basis
of minimal vectors. In the remaining $5$~cases, $S(\Lb)$ generates 
a lattice of index~$8$ 
(for systems $(3,4,2,0)$, $(3,3,2,1)$ and $(3,2,2,2)$) 
or~$2$ (for systems $(2,4,2,1)$ and $(3,1,3,2)$), which we now 
consider together with the allowed permutation~$(1,3)$. 

In all cases, we have $m_1\ge 1$, and the argument used 
for denominator~$9$ still works: 
for $x$ minimal in $e+\Lb'$, we can exclude that two coefficients 
are equal to~$-7$ (because we would construct in this way 
an $8$-dimensional lattice with $\imath=7$), so that we may assume 
that, say, $a_1=1$, and $(x,e_2,\dots,e_9)$ is then a basis 
of minimal vectors. 

\smallskip 

\noi\underbar{$d=7$}. For six out of eight systems $(m_1,m_2,m_3)$, 
Gram matrices show the existence of a basis of minimal vectors 
for $\Lb$. 
In the remaining two systems, we have $S(\Lb)=S(\Lb')$, 
and we must use the hypothesis ``$e+\Lb'$ contains some minimal 
vector~$x\,$'' for all circular permutations of these two systems, namely 
$$(5,2,2),\,(2,5,2),\,(2,2,5),\,(4,2,3),\,(3,4,2),\,(2,3,4)\,.$$ 
Write $e=\frac{a_1 e_1+\dots + a_9 e_9}7$ and 
$x=\frac{b_1 e_1+\dots + b_9 e_9}7$, with $b_i\equiv a_i\!\!\mod 7$. 
Since $\imath(\Lb)=7$, we have $b_i=a_i$ or $b_i=-(7-a_i)$. 
Denoting by $m'_i$ ($i=1,\dots,6$) the number of subscripts $j$ 
such that $\abs{b_j}=i$, 
we have $m'_1+m'_6=m_1$, $m'_2+m'_5=m_2$ and $m'_3+m'_4=m_3$,  
and $x+\sum_{b_i=-6}\,e_i=\frac{-x+\sum_{bi\ne -6}\, b_i e_i}6$. 
If $m'_1\ge 1$, say, $b_1=1$, then $(x,e_2,\dots,e_9)$ is a basis 
of minimal vectors, so that we may assume that $m'_6=m_1$. 
By the equality above, there exist lattices $L,L'$ 
with $[L:L']=6$ in dimension $n'=10-m_1$, which is possible only if 
$10-m_1\ge 8$, i.e., $m_1=2$, and then the corresponding system 
$(M_1,M_2,M_3)$, namely $(1+m'_5,m'_2+m'_4,m'_3)$ must be one 
of the six systems listed in \cite{martinet-2001}. For five out of these six 
systems, Section~9 of \cite{martinet-2001} immediately shows that there exists 
a basis of minimal vectors for $L$, hence also for~$\Lb$. 
We may thus assume that $(M_1,M_2,M_3)=(3,3,2)$, 
hence first that $m'_5=m_1=2$, next that $m'_2+m'_4=3$, 
so that 
$$x=
\frac{-6(e_1+e_2)-5(e_3+e_4)+3(e_5+e_6)+b_7 e_7+b_8 e_8+b_9 e_9}7$$ 
with $b_7,b_8,b_9=2$ or~$-4$. 
We now write the equality above in the form 
{\small 
$$2(x+e_1+e_2+e_3+e_4)-(e_5+e_6)\pm e_7\pm e_8\pm e_9= 
\frac{-x+e_3+e_4\pm e_7\pm e_8\pm e_9}3\,,$$} 
\hskip-.1cm 
which shows that $y=\frac{-x+e_3-2e_4\pm e_7\pm e_8\pm e_9}3$ 
is minimal. Replacing $x$ by its components on the $e_i$, 
we obtain 
$$y=\frac{2(e_1+e_2)+4e_3-3e_4+e_5+e_6\pm e_7\pm e_8\pm e_9}7\,,$$ 
which shows that $(e_1,\dots,e_8,y)$ is a basis of minimal vectors 
for a lattice containing~$\Lb'$ to index $7$, hence 
equal to~$\Lb$. 

\medskip

\subsection*{Proof of Theorem 1.1 for non-cyclic quotients of order $\mathbf{d=9,8}$}

We now turn to non-cyclic quotients, which are of one of the types 
$(3,3)$, $(4,2)$ or $(2,2,2)$. 

\smallskip 

\noi\underbar{Type $(3,3)$}. \cite[Table~6]{kms-2010} shows that $\Lb$ 
is of the form $\la\Lb',e,f\ra$ where $e,f$ have denominator~$3$, 
where $e$ (resp. $f$) has $6$ (resp. $6$, $6$ or $7$) 
non-zero components. This shows that the $e-e_i$, $i\le 6$ 
are minimal, as well as $6$ vectors $f\mp e_j$ in the first 
two cases, so that we obtain a basis of minimal vectors for $\Lb$ 
by replacing two convenient vectors $e_k,e_\ell$ by vectors of the form 
$e-e_i$, $f-e_\ell$. In the third case, we use the fact that there
exists some minimal vector $y$ in one of the cosets of $f$, $f+e$ 
or $f-e$ modulo $\Lb'$, say, $y\in f+\Lb'$. 
(The automorphism group of the code exchanges these three cosets.) 
We now consider $\Lb''=\la\Lb',f\ra$. We have $\imath(\Lb'')=3$ 
(because $[\Lb:\Lb'']=3$) hence a minimal vector $y\in f+\Lb'$ must have 
$7$~odd components equal to $\pm 1$ or $\pm 2$, not all equal 
to $\pm 2$ (as in the proof of Lemma~3.1 in \cite{martinet-2007}), and we obtain 
a basis of minimal vectors by again replacing two convenient vectors 
$e_k,e_\ell$ by some vectors $e-e_i$ and $f\mp e_j$. 

\smallskip 

\noi\underbar{Type $(4,2)$}. Here we have $\Lb=\la\Lb',e,f\ra$ 
with $e$ of denominator~$4$ and $f$ of denominator~$2$, and there 
exist minimal vectors $x\in e+\Lb'$ or $x\in e+f+\Lb'$ 
and $y\in f+\Lb'$ or $f+2e+\Lb'$, with $e$ and $f$ as in~\cite[Table~7]{kms-2010}. 
Changing the representatives for $\Lb/\Lb'$ if need be, 
we may assume that $x\in e+\Lb'$ and $y\in f+\Lb'$, and it suffices 
to show as for quotients of type $(3,3)$ that the numerators of $x$
and $y$ have some component equal to~$\pm 1$. 
This is clear for $y$: since $\Lb'''=\la\Lb',f\ra$ has index~$2$, 
the numerator of $y$ has components $\pm 1$ 
whenever those of $f$ are $\pm 1$. The same is obviously true 
with $e$ for the first six rows of~\cite[Table~7]{kms-2010}, since~$e$ itself 
is minimal. To deal with the remaining $13$ rows, we observe that 
$\Lb''=\la\Lb',e\ra$ has index~$4$, and that the components 
of the numerator of $x$ are odd exactly when those of $e$ are. 
By~\cite[Table~7]{kms-2010}, there are $t\ge 5$ such components. 
If none was equal to~$\pm1$, there would be $t$ components $\pm 3$ 
in the numerator of $x$, hence there would exist a pair $L,L'$ of 
lattices with $[L:L']=3$ in dimension $n'=9+1-t\le 5$, 
which would contradict the results of~\cite[Table 11.1]{martinet-2001}. 

\smallskip 

\noi\underbar{Type $(2,2,2)$}. 
This case is dealt with as part~\cite[Lemma~3.1]{martinet-2007}, but proofs
are only sketched there. In the following section we give more details for the part needed here,
by proving the following proposition.

\begin{prop}[\cite{martinet-2007}] \label{prop:maxindex8}
A $2$-elementary lattice of maximal index $\imath = 8$ 
and dimension~$n\le 9$, which is generated by its minimal vectors, 
has always a basis of minimal vectors. 
\end{prop}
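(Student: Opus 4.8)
The plan is to exploit the rigid structure of $2$-elementary lattices of maximal index $\imath=8$. Since the quotient $\Lb/\Lb'$ is $2$-elementary of order $8$, it is of type $(2,2,2)$, so we may write $\Lb=\la\Lb',f_1,f_2,f_3\ra$ where $f_1,f_2,f_3$ each have denominator $d'=2$. By Proposition~\ref{propbound}, the numerator coefficients of each $f_k$ (and of any half-integral vector of $\Lb$) lie in $\{0,\pm 1\}$, i.e. each such vector is $\frac12\sum_{i\in I}\varepsilon_i e_i$ for a subset $I\subseteq\{1,\dots,n\}$ and signs $\varepsilon_i=\pm1$. Thus the half-integral elements of $\Lb$ modulo $\Lb'$ correspond to a binary code $C\subseteq\F_2^n$ of dimension $3$, and the hypothesis that $\Lb$ is generated by minimal vectors means $S(\Lb)$ contains, for a generating set of this code, vectors of the stated form that are minimal.

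First I would reduce to showing that each nonzero coset $v+\Lb'$ that is needed to generate $\Lb$ contains a minimal vector whose numerator has some coefficient equal to $\pm 1$ (equivalently, a minimal vector whose support $I$ is a proper subset, $I\neq\{1,\dots,n\}$, or which after sign change has an entry $+1$). Given such a vector $x=\frac12(e_1+\text{(other terms)})$ with the first coefficient $+1$, the vector $(x,e_2,\dots,e_n)$ is a basis of a lattice between $\Lb'$ and $\Lb$ of index $2$ in the direction of $x$; iterating over three independent cosets (choosing at each stage a coordinate not yet "used up") produces a basis of minimal vectors for all of $\Lb$, exactly as in the cyclic cases treated above. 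So the crux is a support/weight argument for a single half-integral minimal vector $x=\frac12\sum_{i\in I}\varepsilon_i e_i$.

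The key step is therefore: if every coefficient of the numerator of $x$ were $\pm1$ with $|I|=n$ (full support), derive a contradiction. Here I would use the descent trick already employed repeatedly: from $x=\frac12\sum_{i=1}^n\varepsilon_i e_i$ with all $\varepsilon_i=\pm1$, the vector $x+\sum_{\varepsilon_i=-1}e_i=\frac12\sum_i e_i'$ has all-$+1$ numerator, and then $2x'-\sum_{i\ge 2}e_i' = e_1'$ type relations show $L=\la\Lb',x\ra$ contains, to a large index, a sublattice generated by $x$ and $n-1$ of the $e_i$; combined with $[\Lb:L]\le\imath/2$ this forces a pair $(L,L')$ of lattices of some index $\ge 4$ (coming from the remaining two-dimensional piece of the $2$-elementary quotient) to live in dimension strictly below $8$, contradicting the classification of maximal indices in~\cite[Table~11.1]{martinet-2001} (no $2$-elementary quotient of that size exists in dimension $<8$). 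For $n=9$ one must also rule out the borderline possibilities where the drop in dimension is only to $8$; there I would invoke the explicit lists of $8$- and $9$-dimensional minimal classes of large index from~\cite{martinet-2001} and~\cite{kms-2010} to check that in each surviving system a coefficient $\pm1$ reappears, just as in the $d=7$ analysis above.

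The main obstacle I expect is the bookkeeping in this last case-distinction: tracking how the three independent half-integral generators interact (their supports may overlap, and "using up" a coordinate for one generator constrains the choices for the next), and handling the finitely many residual systems $(M_1,M_2,\dots)$ in dimension $8$ or $9$ where the naive descent does not immediately yield a $\pm1$ coefficient. This is precisely the kind of place where the sketch in~\cite[Lemma~3.1]{martinet-2007} was terse, so the new content of the proposition is to carry out that finite check carefully, either by hand using the tables or, if needed, by the Step~2 computer-assisted face analysis described in Section~\ref{sec:algorithmic_approach}.
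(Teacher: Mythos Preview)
Your proposal has two genuine gaps. First, Proposition~\ref{propbound} with $d'=2$ and $\imath=8$ gives $|a_i|\le 2$, not $|a_i|\le 1$; a half-integral minimal vector in the coset of a codeword $c$ has $a_i=\pm1$ where $c_i=1$ and $a_i\in\{0,\pm2\}$ where $c_i=0$. Hence every minimal vector in a nonzero half-integral coset \emph{automatically} has a $\pm1$ coefficient (at any position in the support of its codeword), and your ``key step'' --- deriving a contradiction from a full-support, all-$\pm1$ vector $x$ --- attacks a non-obstacle: even in that case, replacing any $e_i$ by $x$ yields a basis of $\la\Lb',x\ra$. The descent you sketch therefore does not lead to the contradiction you want, and ignoring the possible $\pm2$ coefficients discards exactly the case distinction the argument actually needs.

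Second, the real difficulty is the one you defer to your last paragraph: given minimal vectors in three independent cosets, arranging replacements of three \emph{distinct} $e_i$'s that are mutually compatible. The paper's proof rests on an ingredient absent from your plan: every binary code of dimension~$3$ and length $\le 9$ has minimum weight $\le 4$. One chooses a generator $g=\frac{e_{i_1}+e_{i_2}+e_{i_3}+e_{i_4}}2$ of short support and then applies the two-case analysis already carried out for type~$(2,2)$ in Proposition~\ref{prop:smallindex} to the remaining generators $e,f$: Case~1 (some $\pm2$ coefficient in the ``wrong'' interval) produces a Minkowskian sublattice with $\Lb/L'$ of type $(4,2)$, handled in Section~\ref{seclargeindex}; Case~2 builds a basis for $\la\Lb',e,f\ra$ by replacing two $e_i$'s and then exploits the short support of $g$ to make the third replacement at a coordinate outside the supports of $e$ and~$f$. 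Without the low-weight observation and this reduction to the already-treated $(2,2)$ and $(4,2)$ cases, your proposed iteration has no mechanism to guarantee success, and no finite check of ``residual systems'' will salvage it because the obstruction was never correctly identified.
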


\section{Lattices with 2-elementary quotients} 

\label{sec:2elementary}

In this section we give detailed proofs for two assertions, 
which were only sketched in \cite{martinet-2007}, 
concerning elementary quotients of order $4$ and~$8$. 
We first consider lattices of index $\imath\le 4$ 
and dimension~$n\le 10$, with the usual notation 
$\Lb$, $\Lb'$, $\cB=(e_1,\dots,e_n)$, 
assuming that $\Lb$ is generated by its minimal vectors. 
By showing that these lattices always have a basis of minimal
vectors, we obtain a proof for the following proposition,
which is even a bit stronger than the assertion of Theorem~\ref{mainthm} for these indices.

\begin{prop}[\cite{martinet-2007}] \label{prop:smallindex}
A lattice of maximal index $\imath\le 4$ 
and dimension~$n\le 10$, which is generated by its minimal vectors, 
has always a basis of minimal vectors. 
\end{prop}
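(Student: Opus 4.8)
The plan is to prove Proposition~\ref{prop:smallindex} by induction on the dimension~$n$, treating the cyclic cases $d = \imath \in \{2,3,4\}$ and the non-cyclic case of type $(2,2)$ separately, and in each case using Proposition~\ref{propbound} together with the classification data of~\cite{martinet-2001} (Table~11.1) to force the numerator of a suitable additional minimal vector to have a coefficient equal to~$\pm 1$. First I would dispose of the type $(2,2)$ quotient: here $\Lb = \langle \Lb', e, f\rangle$ with $e, f$ of denominator~$2$, and one argues as in the type $(4,2)$ discussion above --- each of $\langle\Lb',e\rangle$, $\langle\Lb',f\rangle$, $\langle\Lb',e+f\rangle$ has index~$2$, so the numerator of a minimal vector lying in the relevant coset has its $\pm 1$-pattern dictated by that of $e$ (resp.\ $f$, $e+f$); one then swaps two basis vectors $e_k, e_\ell$ for two of these minimal vectors and checks this yields a basis.

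For the cyclic case $d = \imath \le 4$, the hypothesis that $\Lb$ is generated by its minimal vectors --- $d$ being a prime power when $d \in \{2,3,4\}$ --- supplies a minimal vector $x$ generating $\Lb/\Lb'$, so $x = \frac{1}{d}\sum a_i e_i$ with $\gcd(a_i,d)$ trivial for at least one~$i$. By Proposition~\ref{propbound} we have $|a_i| \le d' = d$, so each $a_i \in \{0, \pm 1, \dots, \pm(d-1), \text{possibly } \pm d\}$, and reducing mod~$d$ the relevant coefficients lie in a short range. The key reduction step, exactly as in the proof for $d = 7,8,9$ above, is: if some coefficient $a_i$ equals $\pm 1$ then $(x, e_1, \dots, \widehat{e_i}, \dots, e_n)$ is already a basis of minimal vectors and we are done; otherwise every nonzero coefficient of the numerator is $\pm 2$ (when $d \le 3$ the only remaining possibility after excluding $\pm 1$) or $\pm 3$ (when $d = 4$), and then rewriting $x + \sum_{a_i = \pm(d-1)} e_i$ as a fraction with denominator $d-1$ exhibits a pair $L \supset L'$ with $[L:L'] = d-1$ in a dimension $n' = n + 1 - (\text{number of such coefficients})$; comparison with the permissible systems in~\cite[Table~11.1]{martinet-2001} for index $d-1 \le 3$ either forces the number of extreme coefficients to be small enough that the residual system reads off a basis of minimal vectors directly, or rules the configuration out entirely. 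For $d = 2$ there is essentially nothing extra to check, since exclusion of $\pm 1$-coefficients is impossible; for $d = 3$ and $d = 4$ one runs through the (very short) list of index-$2$ and index-$3$ minimal classes in dimensions $\le 10$.

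The main obstacle is the bookkeeping in the $d = 4$ subcase: here after excluding $\pm 1$ the numerator of $x$ need not be purely $\pm 3$ --- coefficients $\pm 2$ are also allowed, corresponding to $x$ not generating $\Lb/\Lb'$ on its own, which forces one to bring in a second additional minimal vector and check that the two together produce a basis. I would handle this by noting that the set of minimal cosets must generate the cyclic group $\Z/4\Z$, so at least one minimal vector has a $\pm 1$ or $\pm 3$ in the relevant slot; combined with the denominator-$2$ vector $2e$ (whose $\pm 1$-pattern comes from the index-$2$ sublattice $\langle\Lb',2e\rangle$) one again performs the two-vector swap. Throughout, the dimension bound $n \le 10$ matters only through the statement that index-$3$ pairs do not exist below dimension~$6$ and index-$4$, $2$-elementary pairs are constrained in low dimension, so that the ``residual lattice'' $L$ obtained by the denominator-lowering trick always falls into a range covered by the tables of~\cite{martinet-2001}; this is why the argument stops working, and a genuine counterexample appears, only once one reaches $\imath = 5$ and $n = 10$.
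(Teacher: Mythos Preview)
Your overall structure matches the paper's: split into cyclic $d\in\{2,3,4\}$ and type~$(2,2)$, and in each case try to force a coefficient~$\pm 1$ in the numerator of a suitable minimal vector. For the cyclic cases the paper simply cites~\cite{martinet-2007}, so your sketch of the denominator-lowering argument is in the right spirit (though your remark that ``coefficients $\pm 2$ are also allowed, corresponding to $x$ not generating $\Lb/\Lb'$'' is confused: when $\Lb/\Lb'\cong\Z/4\Z$ and $x$ generates it, the numerator of~$x$ has at least one odd coefficient, but may \emph{also} have even ones~$0,\pm 2,\pm 4$; this does not prevent~$x$ from generating).

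The genuine gap is in your treatment of type~$(2,2)$. You assert that after replacing two basis vectors $e_k,e_\ell$ by minimal vectors $x\in e+\Lb'$ and $y\in f+\Lb'$ one ``checks this yields a basis''. This check can fail. Writing $x=\tfrac12\sum a_ie_i$, $y=\tfrac12\sum b_ie_i$ with $|a_i|,|b_i|\le 2$, the swap gives a basis of~$\Lb$ if and only if $|a_kb_\ell-a_\ell b_k|=1$. With the partition $I_1,I_2,I_3$ of the paper one has $a_i=\pm 1$ exactly on $I_1\cup I_2$ and $b_i=\pm 1$ exactly on $I_2\cup I_3$; but for $k\in I_1$ and $\ell\in I_3$ the off-diagonal entries $a_\ell,b_k$ lie in $\{0,\pm 2\}$, and if both are~$\pm 2$ the minor is $\pm 3$ or $\pm 5$, never~$\pm 1$. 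A small example: $I_1=\{1\}$, $I_2=\{2\}$, $I_3=\{3\}$ with $a=(1,-1,2,\dots)$, $b=(2,1,1,\dots)$ gives $a_kb_\ell-a_\ell b_k\in\{3,-3,-3\}$ for all three admissible pairs $(k,\ell)$, so no double swap works. This is exactly the paper's Case~1. The paper resolves it not by a clever choice of swap but by a different idea: from the identity $4(x-e_1)=-2y+\sum_{i\ge 2}(2a_i+b_i)e_i$ one sees that $L'=\langle\Lb',x-e_1\rangle$ is a sublattice of minimal vectors with $\Lb/L'$ \emph{cyclic} of order~$4$, and one then invokes the already-established cyclic case (this is also precisely where the bound $n\le 10$ is used). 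Only in Case~2, where all $a_i$ on~$I_3$ and all $b_i$ on~$I_1$ vanish, does the direct swap work as you describe. Your proposal is missing this reduction step.
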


The proof given in \cite{martinet-2007} 
is only sketched for non-cyclic quotients. 
We complete it in this section. By the $10$-dimensional lattice described in
Section~\ref{sec:counterexample}, it 
turns out that Proposition~\ref{prop:smallindex} is best possible, 
because it has maximal index~$\imath=5$.
Thus our new $10$-dimensional counterexample is minimal 
for both the dimension and the maximal index. 
We note that the $11$-dimensional lattice constructed in~\cite{cs-1995} 
has maximal index $\imath=3$.

\begin{proof}[Proof of Proposition~\ref{prop:smallindex}]
Lattices with maximal index~$\imath=2$, 
and generated by their minimal vectors are easily proved to possess 
a basis of minimal vectors, regardless of the dimension: 
indeed $\Lb$ is generated over $\Lb'$ endowed with a basis 
$\cB=(e_1,\dots,e_n)$ of minimal vectors by one minimal vector 
$x=\frac{a_1 e_1+\dots+a_n e_n}2$ with $\abs{a_j}\le 2$, 
hence with some $a_i$ equal to $\pm 1$; replacing $e_i$ by $x$ 
in $\cB$ yields a basis of minimal vectors for $\Lb$.

When $\Lb/\Lb'$ is cyclic of order $d=3$ or $4$, it is proved in 
\cite{martinet-2007} that $\Lb$ possesses a minimal vector 
of the form $x=\frac{a_1 e_1+\dots+a_n e_n}d$ with at least one $a_i$ 
equal to $\pm 1$, so that replacing $e_i$ by $x$ in $\cB$ 
gives us a basis of minimal vectors. 

Assume now that $\Lb/\Lb'$ is $2$-elementary of order~$4$. 
Then we have $\Lb=\Lb'\cup(e+\Lb')\cup(f+\Lb')\cup(g+\Lb')$ 
where $e,f,g\in\Lb$ have denominators~$2$ and numerators $0$ or~$1$ 
when expressed with respect to the basis $\cB'$ of $\Lb'$, and $e+f+g\in\Lb'$. 

There is a well-defined partition 
of $\{1,\dots,n\}$ into subsets $I_1,I_2,I_3,I_4$ such that 
$$e=\frac 12\sum_{i\in I_1\cup I_2}\,e_i\ \nd \ 
f=\frac 12\sum_{i\in I_2\cup I _3}\,e_i$$ 
(hence $g=\frac 12\sum_{i\in I_1\cup I_3}\,e_i$). 
We set $p_k=\abs{I_k}$, $m_1=p_1+p_2$, $m_2=p_2+p_3$, 
and $m=p_1+p_2+p_3$ (thus $p_4=n-m$). Note that at least 
two of the intervals $I_1,I_2,I_3$ are non-empty. 

Permuting $e,f,g$ if necessary, we may assume that the cosets 
of $e$ and $f$ contain minimal vectors 
$$x=\frac{a_1 e_1+\dots+a_n e_n}2\ \nd\ 
y=\frac{b_1 e_1+\dots+b_n e_n}2\,.$$ 
By Proposition~\ref{propbound}, we have 
$a_i=\pm 1$ if $i\in I_1\cup I_2$ and $a_i=0,\pm 2$ otherwise, 
and similarly 
$b_i=\pm 1$ if $i\in I_2\cup I_3$ and $b_i=0,\pm 2$ otherwise. 

We now construct a basis of minimal vectors for $\Lb$ 
by considering successively two cases. 

\smallskip 

\noi\underbar{Case 1}. Assume first that some $a_i,\,i\in I_3$ 
or some $b_i,\,i\in I_1$ is equal to $\pm 2$. 
Exchanging $x$ and $y$, negating $y$, and permuting some $e_i$ 
if necessary, we may assume that $b_1=2$ and $a_1=1$. 
Then 
$$
4x+2y= 4 e_1 + \sum_{i\ge 2}\, (2a_i + b_i) e_i
$$
and we obtain
$$
x-e_1=\frac{-2y+\sum_{i\ge 2}\,c_i e_i}4
$$ 
where $c_i=2a_i+b_i$ is odd for $i\in I_2\cup I_3$. 
We have thus constructed a sublattice $L'=\langle \Lb', x-e_1\rangle $ of $\Lb=\langle \Lb', e,f\rangle $ 
such that $\Lb/L'$ is cyclic, which implies the existence 
of a minimal basis for $\Lb$ since we assume $n\le 10$. 

\smallskip 

\noi\underbar{Case 2}. Assume now that all $a_i$ for $i\in I_3$ 
and all $b_i$ for $i\in I_1$ are zero. Then we obtain a basis of 
minimal vectors for~$\Lb$ by replacing one $e_i$ by~$x$ and one $e_j$ 
by $y$ where $i,j$ are chosen in two distinct 
intervals $I_k$. 
\end{proof}

\medskip 

We can now also deal with the Type $(2,2,2)$ case at the end of
Section~\ref{seclargeindex}, by proving the proposition 
about elementary quotients $\Lb/\Lb'$ of order~$8$ in dimension $n\le 9$.

\begin{proof}[Proof of Proposition~\ref{prop:maxindex8}]
One easily checks that binary codes of dimension~$3$ and length $\ell\le 9$ 
(and even $\ell\le 10$) are of weight $wt\le 4$. 
Hence if $\Lb/\Lb'$ is of type $(2,2,2)$, one may 
write $\Lb$ in the form $\Lb=\la\Lb',e,f,g\ra$ where $g$ 
is of the form $g=\frac{e_{i_1}+e_{i_2}+e_{i_3}+e_{i_4}}2$, 
and there exist minimal vectors $x\in e+\Lb'$ and $y\in f+\Lb'$. 
We now consider the two cases in the proof of Proposition~\ref{prop:smallindex} 
for the lattice $L=\la\Lb',e,f\ra$. 

If Case~1 holds, there exists $L'\subset L$ 
such that $\Lb/L'$ is of type $(4,2)$, and then the existence 
of a basis of minimal vectors has been proved 
in Section~\ref{seclargeindex}. If Case~2 holds, we first construct 
as above a basis for $L$ by replacing two vectors $e_i,e_j$ by $x,y$, 
and then a basis for $\Lb$ by replacing by $g$ some $e_{i_k}$ 
which belongs to the numerator of $g$ but not to those 
of $e$ and $f$.
\end{proof}

\medskip 

\noi{\bf Remark.} When $[\Lb/\Lb']$ is elementary of order~$4$, 
the existence of a basis of minimal vectors for $\Lb$ 
could have been proved {\em without assuming the condition $n\le 10$}, 
as asserted in \cite{martinet-2007}, but with a less simple proof.

\section{Lattices of maximal index 6} \label{sec:index6} 

We may write $\Lb=\la\Lb',e\ra$ 
where $e=\frac{\sum_{i=1}^9\,a_i e_i}6$ with increasing 
$a_i\in\{0,1,2,3\}$ and $(e_1,\dots,e_9)$ is a basis for~$\Lb'$. 
For $i=0,1,2,3$, we denote by $m_i$ the number of $a_j$ equal to~$i$ 
and set $m=m_1+m_2+m_3$. We have $m=8$ 
(six systems $(m_1,m_2,m_3)$ listed in~\cite[Table~11.1]{martinet-2001}) 
or $m=9$ 
($20$ systems listed in~\cite[Table~2]{kms-2010}). 
We assume that $\Lb$ is generated by its minimal vectors, 
which amounts to the existence of either a minimal vector 
$x=\frac{\sum_{i=1}^9\,b_i e_i}6\in e+\Lb'$ 
(then, $b_i\equiv a_i\mod 6$ and $\abs{b_i}\le 6$), 
or the existence of two minimal vectors 
$y=\frac{\sum_{i=1}^9\,c_i e_i}3\in 2e+\Lb'$ 
and 
$z=\frac{\sum_{i=1}^9\,d_i e_i}2\in 3e+\Lb'$, 
with $c_i\equiv a_i\mod 3$ and $d_i\equiv a_i\mod 2$. 

\smallskip 

Let us first consider the case when there is a minimal vector 
$x\in e+\Lb'$, with, say, $b_i=1$ or $-5$ for $i\le m_1$, 
$b_i=2$ or $-4$ for $m_1<i\le m_1+m_2$, $b_i=\pm 3$ 
if $m_1+m_2<i\le m$ (and we then may assume that $b_i=+3$). 
If $b_i=1$ for some~$i$, then a basis of minimal vectors 
trivially exists. Otherwise, the argument used in the section above 
to deal with denominator~$7$ will show the existence 
of a pair $(L,L')$ of lattices with $\imath=5$ in dimension $10-m_1$, 
which is possible only if $m_1\le 2$ and leaves us 
with the three systems $(2,5,2)$, $(2,4,3)$, and $(1,5,4)$. 

In the first case, $z=\frac{e_1+e_2+e_8+e_9}2$ is minimal, 
and we have 

\smallskip 
\ctl{$x=\frac{4(z-e_1-e_2)+b'_3 e_3+\dots+b'_9 e_9}3$} 

\smallskip\noi 
for some $b'_i$ equal to $1$ or $-2$, 
among which at least three must be odd. 
Replacing $e_9$ by $z$ in $(e_1,\dots,e_9)$  
and one $e_i$ with $3\le i\le 7$ by $x$, 
we obtain a basis of minimal vectors for~$\Lb$. 

The same argument works in the third case, 
taking $z=\frac{e_1+e_7+e_8+e_9}2$. 

In the second case, $y=\frac{e_1+e_2+e_3-e_4-e_5+2e_6}3$ is minimal, 
we have 

\smallskip 
\ctl{$x=\frac{4y\pm e_7\pm e_8\pm e_9+b'_3 e_3+\dots+b'_6 e_6}2$} 

\smallskip\noi 
(with $b'_i=0,\pm 1$ or $3$), and we obtain a basis of minimal 
vectors for~$\Lb$ by replacing $e_1$ by $x$ and $e_9$ by~$y$. 

\smallskip 

From now on we assume that $e+\Lb'$ does not contain any minimal 
vector, and work with the minimal vectors $y\in 2e+\Lb'$ 
and $z\in 3e+\Lb'$. Proposition~\ref{propbound}
shows that we have $\abs{c_i}\le 3$ and $\abs{d_j}\le 2$. 
By the results of \cite{martinet-2001} and \cite{kms-2010}
for $m=8$ (resp. $9$), there exist $6$ 
(resp. $20$ minimal classes), 
among which $5$, those with $s>14$ (resp. $4$, those with $s>17$) 
contain minimal vectors in $e+\Lb'$. 
So we are left with~$1$ (resp. $15$) minimal classes. 

\smallskip 

We can get rid of the remaining minimal class with $m=8$ 
and of the two minimal classes with $m=9$ and $s=17$ by the following 
argument, which is valid for any dimension: 
assume that we have $m_1+m_2=6$ and that $3e+\Lb$ contains 
some minimal vector~$z$; then a vector of the form 
$y=\frac{\pm e_1\pm\dots\pm e_5\pm 2e_6}3$ is minimal, 
and since the numerator of $z$ has component $d_m=\pm 1$, 
$(y,e_2,\dots,e_{m-1},z,\dots)$ is a basis of minimal vectors for~$\Lb$. 

We could get rid in a similar way 
(by a slightly more complicated argument) of the four minimal classes with
$m=9$ and $s=15$, but it appears to be very difficult to construct 
bases of minimal vectors by elementary arguments 
for the $9$ minimal classes with $m=9$, for which $s(\Lb)=s(\Lb')=9$ is possible. 
So in this case we used the general approach described in Section~\ref{sec:algorithmic_approach}
to exclude the existence of lattices that are generated by minimal vectors,
but which do not have a basis among them.

\smallskip 

In fact, we ran a computer calculation on all~$2574$ 
possible cases with additional minimal vectors $y\in 2e+\Lb'$ and $z\in 3e+\Lb'$, 
having coefficients $|c_i|\leq 3$ and $|d_j|\leq 2$,
and falling into one of the~$15$ cases with $d=6$ and $s\leq 17$ listed in~\cite[Table~2]{kms-2010}.
Using the general approach of Section~\ref{sec:algorithmic_approach},
these computations show that all but~$22$ of these cases are {\em infeasible},
that is, lattices respectively Gram matrices with these parameters do
not exist.
All of the $22$~feasible cases turn out to have parameters $m_1=5$ and
$m_2=4$.
Considering the corresponding Gram matrices 
displayed in the file {\tt Gramindex.gp} in the online appendix of~\cite{kms-2010} 
shows that in all of these cases, 
the set of minimal vectors also contains a basis of minimal vectors.
By this we complete the proof of Theorem~\ref{mainthm} for lattices of index~$6$.

\section{Lattices of maximal index 5} \label{sec:index5} 

We consider lattices of dimension~$n$ ($n$ will be~$8$,~$9$ or~$10$) 
of the form 
$\Lb= \langle \Lb' , e, e' \rangle$ where
$$e=\frac{e_1+\dots+e_{m_1}+2(e_{m_1+1}+\dots+e_{m_1+m_2})}5\,,$$ 
$$e'=\frac{2(e_1+\dots+e_{m_1})-(e_{m_1+1}+\dots+e_{m_1+m_2})}5\,,$$ 
and $\cB=(e_1,\dots,e_n)$ is a basis for $\Lb'$. 
From \cite{martinet-2001}, we know that $\ell:=m_1+m_2$ is at least $8$, 
that if $\ell=8$ (resp. $\ell=9$) we must have $2\le m_1\le 6$ 
(resp. $1\le m_1\le 9$), and that lattices with 
$(m_1,m_2)=(2,6)$, $(4,4)$, $(6,2)$, $(1,8)$ or $(8,1)$ 
necessarily have bases of minimal vectors. 
In the remaining cases, we must use the hypothesis 
that~$\Lb$ is generated by its minimal vectors, which amounts to saying 
that there exists some minimal vector in one of the two cosets 
$e+\Lb'$ or $2e+\Lb'$ modulo~$\Lb'$, and since we may exchange 
$e$ and $e'$ (see \cite{martinet-2001}, Example~3.3), we may and shall assume 
that the coset of $e$ contains some minimal vector~$x$. 
This vector must be of the form 
$$x=\frac 15\,\sum_{i=1}^n\,a_i\,e_i$$ 
where the $a_i$ satisfy the following congruences modulo~$5$: 
$a_i\equiv 1$ if $i\le m_1$, $a_1\equiv 2$ if $m_1<i\le\ell$, 
and $a_i\equiv 0$ if $i>\ell$, and are bounded from above by~$5$ 
(because $i(\Lb)=5$). 

\smallskip\noi 
Assuming that $n\le 9$, we shall now derive a contradiction 
from the assumptions 
\newline (1) there exists a vector $x$ as above, and 
\newline (2) $\Lb$ has no basis of minimal vectors. 

First observe that $a_i=1$ is impossible, since replacing 
$e_i$ by $x$ yields a basis of minimal vectors for~$\Lb$. 
We may thus assume that $a_i=-4$ for $1\le i\le m_1$. 

To simplify the notation, for $i=4,2,3,5$, set 
$$\Sigma_i=\sum_{\abs{a_k}=i} e_k\ \ \nd\ \ 
m'_i=\abs{\{k\mid{\abs{a_k}=i}\}}\,;$$ 
we thus have $m'_4=m_1$, $m'_2+m'_3=m_2$, $m'_5=n-\ell$, and 
$$x=\frac{-4\Sigma_4+2\Sigma_2-3\Sigma_3+5\Sigma_5}5\ \ \nd\ \ 
e=\frac{\Sigma_4+2(\Sigma_2+\Sigma_3)}5\,.\eqno{(*)}$$ 
We now write down two identities which will allow us 
to make use of inequalities involving denominators 
$4$ and $2$ first, and then~$3$: 
$$ 
\aligned 
x+\Sigma_4+\Sigma_3-\Sigma_5&= 
\frac{(-x+\Sigma_3+\Sigma_5)+2\Sigma_2}4\\ 
2x+\Sigma_4+\Sigma_3-\Sigma_2-2\Sigma_5&= 
\frac{x-\Sigma_4-\Sigma_2-\Sigma_5}3 
\,.\endaligned$$ 
From the classification of lattices of maximal index $2$ and~$4$ 
(resp.~$3$); see \cite[Theorem~2.2 and Table~11.1]{martinet-2001})
, 
we deduce the inequalities 

\smallskip 

\ctl{$m'_3+m'_5\ge 3$, $m'_3+m'_5+2m'_2\ge 7$ and $m'_3+m'_5+m'_2\ge 6$} 

\smallskip 

\ctl{(resp. $m'_4+m'_2+m'_5\ge 5$)\,.} 

\smallskip 

If $m'_3+m'_5=3$, then $f=\frac{-x+\Sigma_3+\Sigma_5}2$ 
belongs to $\Lb$, and a short calculation shows that $f=2\,e-\Sigma_2$. 
Hence replacing in $\cB$ an $e_i$ with $a_i=-3$ or $5$ by $f$, 
we obtain a lattice containing $\Lb'$ and 

\smallskip 

\ctl{$e=6e-5e=3(f+\Sigma_2)-(\Sigma_4+2\Sigma_2+2\Sigma_3)$\,,} 

\noi 
hence the lattice~$\Lb$, which shows that in this case, $\Lb$ possesses 
a basis of minimal vectors. 

If $m'_3+m'_5+2m'_2=7$, let 
$f=\frac{(-x+\Sigma_3+\Sigma_5)+2\Sigma_2}4$, and let $y$ 
be a vector $e_i$ with $a_i=-3$, $2$ or $5$. Then $f-y$ is minimal, 
and a short calculation shows that $f=e$. Hence replacing in $\cB$ 
a convenient $e_i\ne y$ by $f-y$, we again obtain~$\Lb$. 

If $m'_4+m'_2+m'_5=5$, let 
$g=\frac{x-\Sigma_4-\Sigma_2-\Sigma_5}3$, and let $y$ 
be a vector $e_i$ with $a_i=-4$, $2$ or $5$. 
Then $g+y$ is minimal, and we have this time $2g=-e+\Sigma_4$, 
which again shows the existence in this case of a basis of minimal 
vectors for~$\Lb$. 

Summarizing, we have: 

\begin{lemma}\label{leminv} 
Let $\Lb$ be a lattice of maximal index $5$ generated by its minimal 
vectors but having no basis of minimal vectors. Then $\Lb$ is 
generated by a basis $\cB=(e_1,\dots,e_n)$ of minimal vectors 
for a lattice $\Lb'$ of index~$5$ in $\Lb$ and a minimal vector $x$ 
as in $(*)$ which satisfies the conditions 
$$m'_3+m'_5\ge 4,\quad m'_3+m'_5+2m'_2\ge 8\ \,\nd\,\ 
m'_4+m'_2+m'_5\ge 6\,.$$ 
\end{lemma}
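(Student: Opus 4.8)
The plan is to read Lemma~\ref{leminv} off from the analysis carried out in the paragraphs immediately preceding its statement: the lemma simply records the sharp forms of the three inequalities obtained there, so the argument amounts to normalizing the data for a putative counterexample and then discarding the three boundary cases.

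First I would fix the set-up. Since $e$ and $e'$ may be exchanged (\cite{martinet-2001}, Example~3.3) and $\Lb$ is assumed to be generated by its minimal vectors, we may suppose that $e+\Lb'$ contains a minimal vector $x=\frac15\sum_i a_ie_i$ with $a_i\equiv1\pmod 5$ for $i\le m_1$, $a_i\equiv2\pmod 5$ for $m_1<i\le\ell$, $a_i\equiv0\pmod 5$ for $i>\ell$, and $\abs{a_i}\le5$ (by $\imath(\Lb)=5$, cf.\ Proposition~\ref{propbound}). If some $a_i=1$ we could replace $e_i$ by $x$ and obtain a basis of minimal vectors, so under assumption~(2) necessarily $a_i=-4$ for all $i\le m_1$. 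With the abbreviations $\Sigma_i$ and $m'_i$ introduced above this puts $x$ and $e$ into the form~$(*)$, with $m'_4=m_1$, $m'_2+m'_3=m_2$ and $m'_5=n-\ell$.

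Next I would establish the weak inequalities from the two auxiliary identities, read as descriptions of small-index overlattices of Minkowskian sublattices. From
$$x+\Sigma_4+\Sigma_3-\Sigma_5=\frac{(-x+\Sigma_3+\Sigma_5)+2\Sigma_2}{4}$$
together with the classification of lattices of maximal index $2$ and $4$ (\cite[Theorem~2.2 and Table~11.1]{martinet-2001}), which pins down the coordinates of a generator modulo such a sublattice, one gets $m'_3+m'_5\ge3$, $m'_3+m'_5+2m'_2\ge7$ and $m'_3+m'_5+m'_2\ge6$. From
$$2x+\Sigma_4+\Sigma_3-\Sigma_2-2\Sigma_5=\frac{x-\Sigma_4-\Sigma_2-\Sigma_5}{3}$$
and the classification of lattices of maximal index $3$ one gets $m'_4+m'_2+m'_5\ge5$.

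Then comes the substantive part: showing that each of the equalities $m'_3+m'_5=3$, $m'_3+m'_5+2m'_2=7$, $m'_4+m'_2+m'_5=5$ leads to a basis of minimal vectors for $\Lb$, contradicting~(2). In the first case one checks that $f=\frac{-x+\Sigma_3+\Sigma_5}{2}$ lies in $\Lb$, equals $2e-\Sigma_2$, and is minimal, and then uses $e=3(f+\Sigma_2)-(\Sigma_4+2\Sigma_2+2\Sigma_3)$ to see that swapping an $e_i$ with $a_i=-3$ or $5$ for $f$ gives a basis. In the second case $f=\frac{(-x+\Sigma_3+\Sigma_5)+2\Sigma_2}{4}=e$ and $f-y$ is minimal for a suitable $y=e_i$; in the third case $g=\frac{x-\Sigma_4-\Sigma_2-\Sigma_5}{3}$ satisfies $2g=-e+\Sigma_4$ and $g+y$ is minimal for a suitable $y=e_i$; in both, the corresponding swap produces a basis of minimal vectors. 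The step I expect to cost the most care is the verification that $f$, $f-y$ and $g+y$ are \emph{genuinely} minimal vectors: this rests on the structural information on how tightly the coordinates of a generator of an index-$2$, $3$ or $4$ overlattice of a Minkowskian sublattice are constrained, combined with bookkeeping on the supports $\Sigma_i$ under the congruences modulo $4$, $3$ and $2$; everything else is routine. Finally, combining the weak inequalities with the exclusion of equality yields $m'_3+m'_5\ge4$, $m'_3+m'_5+2m'_2\ge8$ and $m'_4+m'_2+m'_5\ge6$ (and then $m'_3+m'_5+m'_2\ge6$ is automatic), which is the assertion of the lemma.
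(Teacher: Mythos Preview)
Your proposal is correct and follows the paper's own argument essentially verbatim: the lemma is indeed just a summary of the preceding analysis, and you have reproduced the normalization, the two displayed identities, the derivation of the weak inequalities from the index-$2$, $3$, $4$ classifications, and the exclusion of each boundary case by exhibiting a basis of minimal vectors. Your flagged concern---that the minimality of $f$, $f-y$, $g+y$ needs the boundary-case rigidity in those classifications (e.g.\ the averaging argument forcing $\frac{-x+\Sigma_3+\Sigma_5}{2}$ to be minimal when $1+m'_3+m'_5=4$)---is exactly the point the paper leaves implicit, so you have it right.
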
 

\begin{corol} \
Let $\Lb$ be a lattice of dimension $\le 9$ and maximal index $5$ 
generated by its minimal vectors but having no basis of minimal 
vectors. Then $\Lb$ has the invariants $m_1=3$, $m_2=5$ and has 
a minimal vector $x$ with invariants $m'_2=2$ and $m'_3=3$. 
\end{corol}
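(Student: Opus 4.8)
The plan is to deduce the corollary as a purely arithmetic consequence of Lemma~\ref{leminv}, of the bound $n\le 9$, and of the constraints on $(m_1,m_2)$ recalled from~\cite{martinet-2001} at the beginning of this section. Throughout I use the translation between the two sets of invariants established just before $(*)$: namely $m'_4=m_1$, $m'_2+m'_3=m_2$ and $m'_5=n-\ell$ with $\ell=m_1+m_2$, so that $m'_4+m'_2+m'_3+m'_5=n$, together with the fact that $\ell\ge 8$.

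The first step is to fix the dimension. Adding the first and third inequalities of Lemma~\ref{leminv} gives $(m'_3+m'_5)+(m'_4+m'_2+m'_5)\ge 10$, that is $n+m'_5\ge 10$, hence $m'_5\ge 10-n\ge 1$. Since also $m'_5=n-\ell\le n-8$, the two bounds $1\le m'_5\le n-8$ force $n=9$, $\ell=8$ and $m'_5=1$. In particular we are in the case $\ell=8$, in which~\cite{martinet-2001} imposes $2\le m_1\le 6$ and tells us that the pairs $(m_1,m_2)=(2,6),(4,4),(6,2)$ already admit a basis of minimal vectors and hence cannot occur under our hypothesis.

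The second step substitutes $m'_5=1$ into Lemma~\ref{leminv}, turning the three inequalities into $m'_3\ge 3$, $m'_3+2m'_2\ge 7$ and $m'_4+m'_2\ge 5$, while $m'_4+m'_2+m'_3=n-m'_5=8$. Comparing $m'_4+m'_2\ge 5$ with $m'_4+m'_2+m'_3=8$ gives $m'_3\le 3$, so $m'_3=3$; then $m'_4+m'_2=5$, and $m'_3+2m'_2\ge 7$ forces $m'_2\ge 2$, i.e.\ $m_1=m'_4=5-m'_2\le 3$. Together with $m_1\ge 2$ this leaves $m_1\in\{2,3\}$; but $m_1=2$ would give $(m_1,m_2)=(2,6)$, excluded above, so $m_1=3$. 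Hence $m'_4=3$, $m'_2=5-3=2$, $m'_3=3$ and $m_2=m'_2+m'_3=5$, which are exactly the asserted invariants (and incidentally $n=9$, $m'_5=1$).

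The argument is entirely elementary and I foresee no genuine obstacle; the only point that needs attention is importing the data from~\cite{martinet-2001} faithfully — the bound $\ell\ge 8$, the precise range of $m_1$ when $\ell=8$, and which pairs $(m_1,m_2)$ are already known to possess a basis of minimal vectors — and keeping the two parametrizations $(m_1,m_2)$ and $(m'_2,m'_3,m'_4,m'_5)$ consistent.
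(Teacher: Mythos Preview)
Your proof is correct and follows essentially the same approach as the paper: add the first and third inequalities of Lemma~\ref{leminv} to force $n=9$, $\ell=8$, $m'_5=1$, and then use the remaining inequalities together with the excluded pairs from~\cite{martinet-2001} to pin down $(m_1,m_2,m'_2,m'_3)=(3,5,2,3)$. The only cosmetic difference is that you invoke the total $m'_4+m'_2+m'_3=8$ to cap $m'_3$ directly, whereas the paper instead adds the last two inequalities to bound $m'_2$ first; the logic is otherwise identical.
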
 

\begin{proof} 
We know that $n=8$ is impossible. Adding the first and the third 
inequality in Proposition~\ref{leminv}, we get $\ell\ge 10-2m'_5$. 
Hence we must have $n=9$ and $m'_5=1$, thus $\ell=8$. 
Adding the last two inequalities, we get $\ell+2m'_2\ge 14-2m'_5$, 
hence $m'_2\ge 2$. From $m'_3\ge 4-m'_5=3$, we get $m_2\ge 5$, 
and since $m_2=6$ is excluded, we are left with $m_2=5$, 
which implies $m_1=3$, $m'_2=2$ and $m'_3=3$. 
\end{proof} 

To prove the existence of a basis of minimal vectors for lattices 
of index~$\imath=5$, it now suffices to consider the case 
when we may write 
\begin{equation} \label{eqn:edef}
e =\frac{e_1+e_2+e_3+2(e_4+e_5+e_6+e_7+e_8)}5\ \quad \nd
\end{equation} 
\begin{equation} \label{eqn:xdef}
x =\frac{-4(e_1+e_2+e_3)+2(e_4+e_5)-3(e_6+e_7+e_8)+5e_9}5 
\,.
\end{equation} 
Here the consideration of denominators $2$, $3$ or $4$ as above does not 
produce obvious new minimal vectors. 
We therefore use 
the general algorithmic approach of Section~\ref{sec:algorithmic_approach}
to deal with this case.
As already described there, choosing $B=(e,e_2,\ldots, e_9)$ as a basis,
we obtain coordinates $\bar{e}^{(1)} =(5,-1,-1,-2,-2,-2,-2,-2,0)$ for~$e_1$ and $a^{(1)}=(-4,0,0,2,2,-1,-1,-1,-1)$ for~$x$.
Assuming a minimum of~$1$, we get ten linear conditions $G[\bar{e}^{(i)}]=1$, $i=1,\ldots,9$, $G[a^{(1)}]=1$
for Gram matrices~$G$ on the Ryshkov polyhedron (the set of Gram matrices in $\cS^9$ with $G[z]\geq 1$ for all $z\in\Z^9$). 
We can make use of some symmetry, as the set of coordinate vectors~$\bar{e}^{(i)}$ and~$a^{(1)}$ 
is invariant with respect to permutations of~$e_2,e_3$, of~$e_4,e_5$ and of~$e_6,e_7,e_8$. This yields four additional
linear conditions, and we obtain a polytope with $25$~vertices satisfying all of the prescribed equations.
Its vertex barycenter scaled by~$900$ is the Gram matrix
\begin{equation} \label{eqn:n9d5gram}
\begin{pmatrix}
1104 & 54 & 54 & 552 & 552 & 528 & 528 & 528 & 312 \\
54 & 900 & 66 & 27 & 27 & -142 & -142 & -142 & 267 \\
54 & 66 & 900 & 27 & 27 & -142 & -142 & -142 & 267 \\
552 & 27 & 27 & 900 & 138 & 102 & 102 & 102 & -87 \\
552 & 27 & 27 & 138 & 900 & 102 & 102 & 102 & -87 \\
528 & -142 & -142 & 102 & 102 & 900 & 216 & 216 & 186 \\
528 & -142 & -142 & 102 & 102 & 216 & 900 & 216 & 186 \\
528 & -142 & -142 & 102 & 102 & 216 & 216 & 900 & 186 \\
312 & 267 & 267 &-87 & -87 & 186 & 186 & 186 & 900 
%
\end{pmatrix}
.
\end{equation}
From it we obtain a list of ten additional 
coordinate vectors
$a^{(j)}$ which are minimal (satisfying $G[a^{(j)}]=1$):
\begin{eqnarray*}
( -1, 0, 0, 1, 1, 0, 0, 0, 1), &
( -1, 0, 0, 1, 0, 0, 0, 0, 0), &
( -1, 0, 0, 0, 1, 0, 0, 0, 0),\\
( -2, 1, 0, 1, 1, 1, 1, 1, 0), &
( -2, 0, 1, 1, 1, 1, 1, 1, 0), &
( -2, 0, 0, 1, 1, 1, 1, 0, 0), \\
( -2, 0, 0, 1, 1, 1, 0, 1, 0), &
( -2, 0, 0, 1, 1, 0, 1, 1, 0), &
( -3, 1, 1, 1, 1, 1, 1, 1, 0), \\
( -3, 0, 0, 1, 1, 1, 1, 1, 1). 
\end{eqnarray*}

So all $9$-dimensional 
lattices generated by minimal vectors of type $e_i$ and $x$ as above,
have ten additional pairs of minimal vectors, and among them we 
find always a basis of minimal vectors. Take for example~$(e-e_4,e_2,\dots,e_9)$.
We have thus proved that $9$-dimensional lattices of maximal index 
$\imath=5$, and which are generated by their minimal vectors, 
indeed have bases of minimal vectors.

Here is a ``check'' for the obtained result:
The $20$~linear conditions of minimal vectors~$S$ have rank~$19$.
Up to scaling, there is therefore a unique {\em perfection relation} (cf.~\cite{bm-2009})
between the $20$~orthogonal projections $p_y$ (in direction $y$) associated to these vectors. 
Setting $S=S_1\cup S_2$ with $S_1=\{\pm e_1,\dots,\pm e_9,\pm x\}$, 
we find that this relation has the simple form 
\begin{equation}  \label{eqn:projection_relation}
\sum_{y\in S_1/\{\pm\}}\,p_y=\sum_{y\in S_2/\{\pm\}}\,p_y\,.
\end{equation} 
By~\cite[Lemma~2.9]{bm-2009}, this implies the identity 
$$\sum_{y\in S_1/\{\pm\}}\,N(y)=\sum_{y\in S_2/\{\pm\}}\,N(y)$$ 
between norms.  
This implies (still assuming that $\min\Lb=N(e_i)$) 
that all vectors in $S_2$ are actually minimal, 
so that we obtain a simple proof using calculations ``by hand'' only.
Note however, that guessing the necessary identity~\eqref{eqn:projection_relation} 
would have been difficult without the help of a computer!

\section{A $10$-dimensional counterexample} \label{sec:counterexample}

To complete the proof 
of Theorem~\ref{mainthm} for $\imath=5$, 
it only remains to exhibit a $10$-dimensional counterexample. 
With the notations of the previous section, we give one
with parameters $m_1=3$, $m_2=7$ and $m'_2=3$, $m'_3=4$.
With respect to the basis $(e,e_2,\ldots, e_{10})$
we consider coordinates $\bar{e}^{(1)}=(-5,1,1,2,2,2,2,2,2,2)$ for~$e_1$ 
and $a=(-4,0,0,2,2,2,1,1,1,1)$ for an additional minimal vector~$x$
that generates the lattice $\Lb$ together with the $e_i$.
We can make use of some symmetry, as the set of coordinate vectors ($\bar{e}^{(i)}$, $i=1,\ldots, 10$, and~$a$) 
is invariant with respect to permutations of~$e_2,e_3$, of~$e_4,e_5,e_6$ and of~$e_7,e_8,e_9,e_{10}$. 
Assuming a fixed minimum, all of these linear conditions turn out to 
define a polytope of Gram matrices with $154$~vertices. Its vertex barycenter is a
Gram matrix which we can scale to have integral coordinates and minimum $6209280$.
Up to isometry, it defines a lattice $\Lb$ having only the $s=11$ pairs 
of minimal vectors that we assumed from the beginning (namely $x$ and
the ten vectors~$e_i$).
It is readily verified that any system of $10$~independent vectors 
extracted from this set generates a sublattice $L$ of~$\Lb$ 
with $\Lb/L$ cyclic of order $2$, $3$, $4$ or~$5$, but not~$1$.

Since the vertex barycenter of the ``realization polytope'' mentioned above has quite 
inconvenient coordinates, we provide below a slightly nicer counterexample in the same polytope.
An analysis of the $154$~vertices shows that it is possible
to take some of the midpoints of two vertices as an interior point.
All of these counterexamples have the same parameters.
Among them, there is a unique one that we can scale to have integral 
coordinates and minimum~$48$:
$$
\begin{pmatrix}
88 & -3 & -3 & 40 & 40 & 40 & 26 & 26 & 26 & 26 \\
-3 & 48 & 10 & 5 & 5 & 5 & -13 & -13 & -13 & -13 \\
-3 & 10 & 48 & 5 & 5 & 5 & -13 & -13 & -13 & -13 \\
40 & 5 & 5 & 48 & 14 & 14 & 4 & 4 & 4 & 4 \\
40 & 5 & 5 & 14 & 48 & 14 & 4 & 4 & 4 & 4 \\
40 & 5 & 5 & 14 & 14 & 48 & 4 & 4 & 4 & 4 \\
26 & -13 & -13 & 4 & 4 & 4 & 48 & 8 & 8 & 8 \\
26 & -13 & -13 & 4 & 4 & 4 & 8 & 48 & 8 & 8 \\
26 & -13 & -13 & 4 & 4 & 4 & 8 & 8 & 48 & 8 \\
26 & -13 & -13 & 4 & 4 & 4 & 8 & 8 & 8 & 48 \\
\end{pmatrix}
$$

%
%

\section*{Acknowledgments} 

We would like to thank the anonymous referee for his helpful comments.
The second author would like to thank the 
Institut de Math\'ematiques at Universit\'e Bordeaux~1 
for its great hospitality during two visits, 
on which major parts of this work were created.

%
%


\providecommand{\bysame}{\leavevmode\hbox to3em{\hrulefill}\thinspace}
\providecommand{\MR}{\relax\ifhmode\unskip\space\fi MR }
\providecommand{\MRhref}[2]{%
\href{http://www.ams.org/mathscinet-getitem?mr=#1}{#2}
}
\providecommand{\href}[2]{#2}

\end{document}